\renewenvironment{proof}
{\textbf{\textit{Proof.}} }
{\hfill$\square$\smallskip}
\theoremstyle{plain}
\newtheorem{theorem}{\indent {Theorem}}[section]{}
\newtheorem{proposition}[theorem]{\indent {Proposition}}
\newtheorem{lemma}[theorem]{\indent {Lemma}}
\newtheorem{corollary}[theorem]{\indent {Corollary}}
\theoremstyle{definition}
\newtheorem{remark}[theorem]{\indent {Remark}}
\title{\textbf{Rings on quotient divisible abelian groups}}
\author{Kompantseva E., Nguyen T.~Q.~T}
\begin{document}
	\maketitle

\begin{abstract}
	The paper is devoted to the study of absolute ideals of groups in the class $\mathcal{QD}1$, which consists of all quotient divisible abelian groups of torsion-free rank 1. A ring is called an $AI$-ring (respectively, an $RF$-ring) if it has no ideals except absolute ideals (respectively, fully invariant subgroups) of its additive group. An abelian group is called an $RAI$-group (respectively, an $RFI$-group) if there exists at least one $AI$-ring (respectively, $FI$-ring) on it. If every absolute ideal of an abelian group is a fully invariant subgroup, then this group is called an $afi$-group.
	It is shown that every group in $\mathcal{QD}1$ is an $RAI$-group, an $RFI$-group, and an $afi$-group. Thus, Problem 93 of L. Fuchs' monograph \emph{``Infinite Abelian Groups, Vol. II, New York-London: Academic Press, 1973''} is resolved within the class $\mathcal{QD}1$. For any group in $\mathcal{QD}1$, all rings on it that are $AI$-rings are described. Furthermore, the set of all $AI$-rings on $G \in \mathcal{QD}1$ coincides with the set of all $FI$-rings on $G$. In addition, the principal absolute ideals of groups in $\mathcal{QD}1$ are described.\\
	
	\noindent\textbf{Keywords:} {Abelian group; quotient divisible Abelian group; ring on an Abelian group; absolute ideal of an Abelian group.}
	
	\noindent\textbf{Mathematics Subject Classification:} {20K30, 20K99, 16B99}
\end{abstract}

\section{Introduction}	
Apart from vector spaces, abelian groups are certainly most commonly found in rings and fields. A ring on an abelian group $G$ is a ring whose additive group coincides with $G$. 
The first papers, in which the relations between the properties of a ring and the structure of its additive group were investigated, provided only a superficial analysis in very special cases \cite{B48,B-Z51,F1956,R-Sz50,Sz49}.
They stimulated interest in the additive groups of rings, and several more substantial papers were published  in the next decade. These papers initiated a systematic study of rings on groups, which has currently become an independent research branch of Abelian group theory (see \cite{A-K-N13,A-A-M24,A24,A-B-R22,B-L74,B-P61,Fe83-88,F2015,Ja82,H-K-N-S16,K10,K-T25} and others).

All groups considered in this work are abelian, and the word ``group'' means an ``abelian group'' everywhere in what follows.

When dealing with rings on a given group $G$, an inevitable problem is to study those subgroups of $G$ that have certain property $\mathcal{P}$ in any ring on $G$. L. Fuchs in \cite{F2015} calls them absolute $\mathcal{P}$. For example, subgroups of a group $G$ which are subrings \cite{A-K-N13}, ideals \cite{B-L74,Fe83-88,Fried71,F2015,Ja82,K-Ph19,K-T22b}, nil-ideals \cite{Fe83-88,F2015,K10,K14}, quasi-regular ideals (consequently, they are contained in the Jacobson radical)\cite{Fe83-88,F12,F2015,Ja82,K10,K-N-V24}, annihilators \cite{Fe83-88,F1956,F12}, etc., in every ring on $G$ are studied. 
We will consider absolute ideals, i.e. subgroups that are (one- or two-sided) ideals in every ring on a given group. 
It is easy to see that the problems of left, right, or two-sided ideals are identical since an anti-isomorphic ring is defined on the same group. In \cite{Fried64}, an ideal $F$ of the endomorphism ring $E(G)$ of the group $G$ is defined, and it is shown that a subgroup $A$ is an absolute ideal of $G$ if and only if $A$ is invariant with respect to this ideal, i.e. $F(A) \subseteq A$. Therefore, any fully invariant subgroup of the group $G$ is its absolute ideal; however, the converse statement is not true. In \cite{Fried64} E. Fried formulated the problem of describing groups, in which every absolute ideal is a fully invariant subgroup; such groups are called $afi$-groups. $Afi$-groups in the class of fully transitive $p$-groups are described in \cite{McL75a,McL75b}, mixed $afi$-groups were studied in \cite{K-Ph19}, and torsion-free $afi$-groups were considered in \cite{K-T22b,K-T24}.


Other problems related to absolute ideals of groups consist in the description of $RAI$-groups and $RFI$-groups. A ring $R$ is called an $AI$-ring (respectively, an $FI$-ring) if any ideal of $R$ is an absolute ideal (respectively, a fully invariant subgroup) of the additive group of $R$. A group on which there exists at least an $AI$-ring (respectively, an $FI$-ring) is called an $RAI$-group (respectively, an $RFI$-group). The problem of describing $RAI$-groups was formulated by L. Fuchs in  \cite[Problem 93]{F12}, such groups were studied in \cite{Fried71,K-Ph19,K-T22b,K-T25,McL75b}. The problem of describing $RFI$-groups was posed in \cite[Problem 66]{F1966}, and later K. McLean described $RFI$-groups in the class of all $p$-groups in \cite{McL75a}. 

Our paper is devoted to the study of the problem related to absolute ideals in the class of quotient divisible groups of torsion-free rank 1. 
A group $G$ is called to be quotient divisible if it does not contain nonzero divisible torsion subgroups but contains a free finite-rank subgroup $F$ such that $G/F$ is a divisible torsion group. The basis of the free group $F$ is called the basis of the quotient divisible group $G$. The concept of a quotient divisible group was introduced by R. Beaumont and R. Pierce in \cite{B-P61} to describe torsion-free groups admitting a ring structure that is embedded in a semisimple separable algebra. Later, this concept was extended to the case of mixed groups in \cite{F-W98}. Currently, the theory of quotient divisible groups attracts many algebraists \cite{A-B-V-W07,A-W2004,A-B-W07,B-P61,B2010,B-S2012,D2007,F-W2003,Fo2014,F-W98,K-N23,W2003}. Let $\mathcal{QD}1$ denote the class of all quotient divisible groups of rank 1.

This paper is a continuation of the papers \cite{K-N23} and \cite{K-N-V24}, where authors respectively study the group $\text{\normalfont Mult}\,G$ of all multiplications and radicals of rings on groups $G\in\mathcal{QD}1$. 
In Section \ref{sec2}, we describe principal ideals of an arbitrary ring on a group $G \in \mathcal{QD}1$ (Theorem \ref{the2.3}). This result allows us in Section \ref{sec3} to describe principal absolute ideals of groups in $\mathcal{QD}1$ (Theorem \ref{the3.3}). The principal absolute ideal of a group $G$ generated by an element $g\in G$ is the smallest absolute ideal $(g)_{AI}$ of the group $G$ containing $g$. Since each absolute ideal of a group is the sum of principal absolute ideals, many questions related to absolute ideals are reduced to the case of principal absolute ideals (for example, see \cite[Lemma 3.1]{K-T25}). 
In Corollary \ref{cor3.2}, we show that any quotient divisible group of rank 1 is an $afi$-group and an $RFI$-group, therefore, it is an $RAI$-group. However, this statement does not clarify which rings on groups in $\mathcal{QD}1$ are $AI$-rings and $FI$-rings. 
In Theorem \ref{the3.4}, for an arbitrary group $G \in \mathcal{QD}1$ we describe all rings on it that are $AI$-rings. Moreover, we show that any $AI$-ring on $G \in \mathcal{QD}1$ is also an $FI$-ring.

Unless otherwise stated, for all definitions and notations, we refer to \cite{F12,F2015,K-N23}.

\section{Principal ideals of rings on quotient divisible groups}\label{sec2}
The aim of this section is to describe the principal ideals of rings on groups in $\mathcal{QD}1$. As usual, $\mathbb{N},\;P$ are sets of natural numbers and all prime numbers, respectively, $\mathbb{Z}$ is the ring of integers, $\mathbb{Q}$ is the group of rational numbers, $\widehat{\mathbb{Z}}_p$ is the ring of $p$-adic integers, $\mathbb{Z}_n$ is a cyclic group of order $n$. If $R$ is a unital ring, then $Re$ is a cyclic module over $R$ generated by the element $e$. If $G$ is a group, $p\in P$, $A\subseteq G$, then $T(G)$ is the torsion part of $G$, $T_p(G)$ is a $p$-primary component of $G$, $\langle A\rangle_\ast $ is a pure subgroup of $G$ generated by the set $A$ \cite[Chapter 5, Section 1]{F2015}. If $g\in G$, then the order and the $p$-height of the element $g$ are denoted by $o(g)$ and $h_p(g)$, respectively.

Let us recall the basic concepts. A function $\chi$ on the set $P$ with values in the set $\{\infty, 0, 1, 2, \ldots\}$ is called a characteristic (see \cite[Chapter 12, Section 1]{F2015}). The characteristic $\chi$ will be written in the form $\chi = (k_p)_{p \in P}$, here $\chi(p) = k_p$.
Two characteristics $(k_p)_{p \in P}$ and $(m_p)_{p \in P}$ are equivalent if the set $S=\{p~|~k_p\neq m_p\}$ is finite, and also $k_p<\infty$ and $m_p<\infty$ for all $p\in S$. Equivalence classes of characteristics are called types. If a type contains a characteristic consisting of $0$'s, then it is called the zero type. A type containing an idempotent characteristic, i.e. a characteristic $(k_p)_{p \in P}$ such that $k_p$ is either $0$ or $\infty$ for every prime $p$, is called an idempotent type.

According to \cite{D2007}, every group in $\mathcal{QD}1$ is uniquely determined, up to isomorphism, by its cocharacteristic $\text{\normalfont cochar}\,G$. Moreover, for any characteristic $\chi$ there exists a group $G\in\mathcal{QD}1$ with $\text{\normalfont cochar}\,G=\chi$.
Let $\chi$ be a characteristic. If $\chi$ belongs to a non-zero type, then we consider the direct product
\begin{equation}\label{eqq2.1}
	\mathbb{Z}_\chi = \prod\limits_{p\in P}\widehat{\mathbb{Z}}_pe_p
\end{equation} 
of cyclic $p$-adic modules $\widehat{\mathbb{Z}}_pe_p$ such that $o(e_p)=p^{\chi(p)}$ for all $p\in P$ (we set $p^\infty=\infty$). If $o(e_p)<\infty$, then the module $\widehat{\mathbb{Z}}_pe_p$ coincides with $\mathbb{Z}e_p$. The quotient divisible group $G$ of rank 1 with 
$\text{\normalfont cochar}\,G=\chi$
is of the form
\begin{equation}\label{eqq2.2}
	G=\langle  e,T(\mathbb{Z}_\chi)\rangle_\ast,
\end{equation}
where $e=(e_p)_{p\in P}$. We denote $P_\chi =\{p\in  P  ~|~ \chi(p)\neq 0\}$. The system $\{e\}$ is a basis of the quotient divisible group $G$ \cite[Theorem 4]{D2007}, while the system $\{e_p~|~p\in P_\chi\}$ satisfying the conditions \eqref{eqq2.1} and \eqref{eqq2.2} is called a $\Pi$-basis of $G$ \cite{K-N23}. 

If $\chi$ belongs to the zero type and $m=\prod\limits_{\chi(p)\neq 0}p^{\chi(p)}$, then the group $G$ in $\mathcal{QD}1$ with $\text{\normalfont cochar}\,G=\chi$  is of the form  $G=\mathbb{Q}\oplus \mathbb{Z}_m$. Therefore, the group $G\in\mathcal{QD}1$ is reduced if and only if  $\text{\normalfont cochar}\,G$ does not belong to the zero type. Let us denote by $\mathcal{RQD}1$ the class of all reduced quotient divisible groups of rank 1.

Let $G\in\mathcal{RQD}1$, $\text{\normalfont cochar}\,G=\chi$, and let $E=\{e_p~|~p\in P_\chi\}$ be a  $\Pi$-basis of the group $G$, $e=(e_p)_{p\in P_\chi}$. We denote  $P_\infty(\chi)=\{p\in  P  ~|~ \chi(p)=\infty\}$, $P_{_N}(\chi)=P_\chi \setminus P_\infty(\chi)=\{p\in  P  ~|~ \chi(p)\in\mathbb{N}\}$. If $P_1\subseteq P$, then a $P_1$-integer is a nonzero integer such that any its prime divisor (if it exists) is contained in $P_1$, and a $P_1$-fraction is a rational number, which can be represented in the form of a fraction whose numerator and denominator are $P_1$-integers. If $p\in P$, $P_1\subseteq P$, then $\pi_ p , \pi_{P_1}$ denote the projections of the group $\mathbb{Z}_\chi$ onto subgroups $\widehat{\mathbb{Z}}_pe_p$ and $\prod\limits_{p\in P_1}\widehat{\mathbb{Z}}_pe_p$, respectively. Note that if $P_1$ is a finite subset of $P_{_N}(\chi)$, then $\pi_{P_1}(G)\subseteq G$ and $\pi_{P\setminus P_1}(G)\subseteq G$.
Let $g\in G$. In \cite{K-N23}, the number $c(g)$ is defined as follows
\begin{equation*}
	c(g)=\left\{\begin{array}{cl}
		\prod\limits_{p\in P_\infty(\chi)}p^{h_p(g)}, & \text{if} \; g\notin T(G), P_\infty(\chi)\neq \varnothing\\
		1, & \text{if} \; g\notin T(G), P_\infty(\chi)= \varnothing\\
		0, & \text{if}\; g\in T(G),
	\end{array}\right.
\end{equation*}
and it is also proved that there exists a set $P_0\subseteq P_\chi$ such that 
\begin{equation}\label{eqq2.5}
	P'=P_\chi\setminus P_0\;\text{is a finite subset of the set}\; P_{_N}(\chi),
\end{equation}
and the element $g$ can be written as follows
\begin{equation}\label{eqq2.6}
	g=c(g)re_0+t,
\end{equation}
where $e_0=\pi_{P_0}(e_0)$, $r$ is a $P\setminus P_0$-fraction, $t\in\bigoplus\limits_{p\in P'}\widehat{\mathbb{Z}}_pe_p$.
The set $P_0=P_0(g)$ satisfying the conditions \eqref{eqq2.5} and \eqref{eqq2.6} is called a $g$-defining set with respect to the $\Pi$-basis $E$. Note that the set $P_0(g)$ is not uniquely defined.

Let $G$ be a group. Recall that the characteristic of the element $g\in G$ is the characteristic $\text{\normalfont char}\,g$ defined by $[\text{\normalfont char}\,g](p)=h_p(g)$. For any group $G$ and any characteristic $\eta$ we denote $G(\eta)=\{x\in G~|~\text{\normalfont char}\,x\geq \eta\}$. It is easy to see that $G(\eta)$ is a fully invariant subgroup of the group $G$ (for example, see \cite[Chapter 12, Section 1]{F2015}). 

\begin{remark}\label{rem2.1}
	If $G\in\mathcal{RQD}1$, $g\in G$, then the group $G(\text{\normalfont char}\,g)$ can be written in the form
	\begin{equation*}
		G(\text{\normalfont char}\,g)=c_gG_0\bigoplus \bigoplus\limits_{p\in P'}p^{h_p(g)}T_p(G)=c_gG_0+ \bigoplus\limits_{p\in P}p^{h_p(g)}T_p(G),
	\end{equation*}
	where $c_g=c(g)$, $P_0$ is any $g$-defining set, $G_0=\pi_{P_0}(G)$, $P'=P \setminus P_0$. \hfill$\square$\smallskip
\end{remark}

To describe the group $ \bigoplus\limits_{p\in P}p^{h_p(g)}T_p(G)$ in the case $T_p(G)$ are cyclic groups for all $p\in P$ (for example, if $G\in\mathcal{QD}1$), we prove the following lemma. Note that if $T_p(G)$ is a nonzero cyclic group, then  $p^\infty T_p(G)=0$.

\begin{lemma}\label{lem2.1}
	Let $G$ be a group, $T(G)=\bigoplus\limits_{p\in P}\mathbb{Z}e_p$, where $o(e_p)=p^{\alpha_p},\;\alpha_p\in\mathbb{N}\cup\{0\}$. If $g\in T(G)$, then $\bigoplus\limits_{p\in P}p^{h_p(g)}T_p(G)=\mathbb{Z}g$.
\end{lemma}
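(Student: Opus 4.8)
The plan is to reduce the identity to a computation in each primary component of $T(G)$ and then to recombine the components by a coprimality (Chinese Remainder) argument. First I would write $g=\sum_{p\in P}g_p$ with $g_p\in T_p(G)=\mathbb{Z}e_p$; as $g$ has finite order only finitely many $g_p$ are nonzero, and $g_p=a_pe_p$ for suitable $a_p\in\mathbb{Z}$. Put $a_p=p^{k_p}u_p$ with $p\nmid u_p$ (taking $g_p=0$ and $k_p=\infty$ when $p^{\alpha_p}\mid a_p$). The local fact I would establish is that $\mathbb{Z}g_p=p^{k_p}T_p(G)$: since $\gcd(u_p,p^{\alpha_p})=1$, the element $u_pe_p$ generates the whole cyclic group $T_p(G)$, whence $\mathbb{Z}g_p=\mathbb{Z}(p^{k_p}u_pe_p)=p^{k_p}\mathbb{Z}(u_pe_p)=p^{k_p}T_p(G)$. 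Here the hypothesis that each $T_p(G)$ is cyclic is exactly what is used.

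The crux is to identify the exponent $k_p$ with the $p$-height $h_p(g)$ computed in the whole group $G$, and this requires two reductions. First, for every $q\neq p$ the component $g_q$ has order coprime to $p$ and is therefore $p$-divisible in $G$ (so $g_q\in p^nG$ for all $n$); it follows that $g\in p^nG$ if and only if $g_p\in p^nG$, and hence $h_p(g)=h_p(g_p)$. Second, the $p$-height of the $p$-element $g_p$ is already realized inside $T_p(G)$: if $g_p=p^m x$ for some $x\in G$ and $o(g_p)=p^s$, then $p^{m+s}x=p^s g_p=0$, so $x$ is a $p$-element, i.e. $x\in T_p(G)$. Consequently $h_p(g)=h_p(g_p)=k_p$, and combined with the local equality this gives $p^{h_p(g)}T_p(G)=\mathbb{Z}g_p$ for every $p$, the case $g_p=0$ being covered by $p^\infty T_p(G)=0$ noted before the lemma.

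Finally I would assemble the components. Because the nonzero $g_p$ lie in primary summands of pairwise coprime orders, the cyclic group generated by their sum decomposes as $\mathbb{Z}g=\bigoplus_{p\in P}\mathbb{Z}g_p$. Substituting the identities $\mathbb{Z}g_p=p^{h_p(g)}T_p(G)$ then yields $\mathbb{Z}g=\bigoplus_{p\in P}p^{h_p(g)}T_p(G)$, as claimed. I expect the main obstacle to be the middle paragraph, namely checking that passing from the torsion subgroup to the possibly mixed ambient group $G$ does not raise the $p$-height of a torsion element; the order argument $p^{m+s}x=p^s g_p=0\Rightarrow x\in T_p(G)$ is the point that makes this transition harmless.
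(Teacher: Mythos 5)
Your proof is correct and follows essentially the same route as the paper's: both reduce the statement to the primary components of $g$ and use coprimality of the orders of the different components to recover each $p^{h_p(g)}e_p$ (equivalently each $\mathbb{Z}g_p$) from $\mathbb{Z}g$. The only real difference is that you explicitly justify the identification $h_p(g)=k_p$ of the height in the ambient (possibly mixed) group with the $p$-adic valuation of the coefficient of $e_p$, a point the paper's proof takes for granted when it writes $g=p^{k_p}s_pe_p+g'$ with $\gcd(p,s_p)=1$.
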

\begin{proof}
	Let  $P_g=\{p\in P~|~h_p(g)=k_p<\infty\}$. Since $g\in T(G)$, the set $P_g$ is finite and consists of the prime divisors of $o(g)$. Let $p\in P_g$. Then the element $g$ can be represented in the following form
	\begin{equation}\label{eqq2.3}
		g=p^{k_p}s_pe_p+g',
	\end{equation}
	where $s_p\in\mathbb{Z}$, $gcd(p,s_p)=1$, $g'\in\bigoplus\limits_{q\in P_{_N}\setminus\{p\}}T_q(G)$. Let $m=o(g')$. Multiplying both sides of \eqref{eqq2.3} by $m$, we obtain
	\begin{equation}\label{eqq2.4}
		mg=p^{k_p}s_pme_p.
	\end{equation}
	Since $\gcd(p, m) = 1$, it follows that $x s_p m e_p = e_p$ for some $x \in \mathbb{Z}$. Multiplying both sides of \eqref{eqq2.4} by $x$, we obtain $p^{k_p} e_p = x m g \in \mathbb{Z} g$. Consequently, $p^{k_p} T_p(G) = (p^{k_p} \mathbb{Z}) e_p \subseteq \mathbb{Z} g$ for $p \in P_g$.

	Since $h_p(g)=\infty$ for each $p\in P\setminus P_g$, it follows that $\bigoplus\limits_{p\in P }p^{h_p(g)}T_p(G)\subseteq \mathbb{Z}g$. 		
	The reverse inclusion is obvious, so $\bigoplus\limits_{p\in P}p^{h_p(g)}T_p(G)=\mathbb{Z}g$.
\end{proof}

Next we will consider rings on groups in $\mathcal{QD}1$. To define a ring on a group, it is necessary to define a multiplication on it. A multiplication on a group $G$ is a homomorphism $\mu:~G\otimes G\rightarrow G$. This multiplication is often denoted by the sign $\times$, i.e. $\mu(g_1\otimes g_2)=g_1\times g_2$ for any $g_1,g_2\in G$. The ring on the group $G$, determined by the
multiplication $\times$, is denoted by $(G,\times)$.
On any group $G$, we can always define the multiplication $\mu: G \otimes G \rightarrow 0$, which is called to be trivial. If there are no multiplications on the group $G$ except the trivial multiplication, then $G$ is called a $nil$-group. Note that, according to \cite{K-N23},  every ring on a group $G \in \mathcal{QD}1$ is associative and commutative.

\begin{lemma}\label{lem2.2}
	Let $G\in\mathcal{RQD}1$, $\times$ be a multiplication on $G$ such that $G\times G\nsubseteq T(G)$, and let $g\in G\setminus T(G)$. Then $g\times G+\mathbb{Z}g=G(\text{\normalfont char}\,g).$
\end{lemma}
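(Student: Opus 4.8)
The plan is to prove the two inclusions $g\times G+\mathbb{Z}g\subseteq G(\text{\normalfont char}\,g)$ and $G(\text{\normalfont char}\,g)\subseteq g\times G+\mathbb{Z}g$ separately. The forward inclusion is the easy direction and uses neither the hypothesis $G\times G\nsubseteq T(G)$ nor the specific structure of $G$. Since $G(\text{\normalfont char}\,g)$ is a fully invariant subgroup containing $g$, it contains $\mathbb{Z}g$. For the products I would use that heights do not decrease under multiplication: if $h_p(g)=k<\infty$, then $g=p^kg'$ for some $g'\in G$, so $g\times x=p^k(g'\times x)\in p^kG$ and $h_p(g\times x)\ge k=h_p(g)$; the case $h_p(g)=\infty$ is analogous. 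Hence $\text{\normalfont char}(g\times x)\ge\text{\normalfont char}\,g$, i.e. $g\times x\in G(\text{\normalfont char}\,g)$ for every $x\in G$.

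For the reverse inclusion I would invoke Remark \ref{rem2.1} to write $G(\text{\normalfont char}\,g)=c_gG_0\oplus\bigoplus_{p\in P'}p^{h_p(g)}T_p(G)$ with $G_0=\pi_{P_0}(G)$ and $P'$ finite, and then show that $g\times G+\mathbb{Z}g$ contains each summand. A preliminary step is to upgrade the hypothesis $G\times G\nsubseteq T(G)$ to $g\times G\nsubseteq T(G)$: if $g\times G\subseteq T(G)$, then for an arbitrary $a\in G$ choose nonzero integers $m,n$ and $\tau\in T(G)$ with $ma=ng+\tau$ (possible since $G/T(G)$ has torsion-free rank $1$); for any $b\in G$ this gives $m(a\times b)=n(g\times b)+\tau\times b\in T(G)$, whence $a\times b\in T(G)$ because $G/T(G)$ is torsion-free, contradicting the hypothesis. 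Thus there exists $x_0\in G$ with $g\times x_0\notin T(G)$.

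To realize the torsion-free summand $c_gG_0$, I would work with $x_0$: the element $g\times x_0$ lies in $g\times G$ and, by the forward inclusion, in $G(\text{\normalfont char}\,g)$, so its non-torsion part is a $c_g$-multiple of an element of $G_0$. Using the explicit description of the multiplications on groups in $\mathcal{RQD}1$ from \cite{K-N23} (and the fact that the factor $r$ in $g=c_gre_0+t$ is a $P\setminus P_0$-fraction), I would show that suitable integer multiples $ng$ together with products $g\times x$ ($x\in G$), which by associativity again lie in $g\times G$, generate all of $c_gG_0$: since $G_0=\pi_{P_0}(G)$ has torsion-free rank $1$, it suffices to reach a unit multiple of $c_ge_0$ and then to match the $p$-divisibility for $p\in P_0$, which is inherited from that of $G$.

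For the torsion summand I would compute $g\times e_p$ for each $p\in P'$. As $e_p$ has order $p^{\chi(p)}$ and products of elements of coprime torsion orders vanish, $g\times e_p\in T_p(G)=\mathbb{Z}e_p$, and the forward inclusion gives $h_p(g\times e_p)\ge h_p(g)$. The crucial point, to be extracted from the multiplication formula of \cite{K-N23}, is that $g\times e_p$ is in fact a \emph{unit} multiple of $p^{h_p(g)}e_p$, so that its $p$-height is exactly $h_p(g)$; Lemma \ref{lem2.1}, applied to the torsion element $g\times e_p$, then yields $p^{h_p(g)}T_p(G)=\mathbb{Z}(g\times e_p)\subseteq g\times G$, and summing over the finite set $P'$ gives the whole torsion part. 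The main obstacle is precisely this exact height bookkeeping in the last two paragraphs: the forward inclusion only delivers the inequalities $h_p(g\times x)\ge h_p(g)$, and one must use the precise structure of the multiplications on $G$ to guarantee that the products $g\times x_0$ and $g\times e_p$ \emph{attain}, and do not overshoot, the heights prescribed by $\text{\normalfont char}\,g$, turning the containments into the desired equality.
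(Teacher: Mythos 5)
Your easy inclusion is fine, and your reduction from $G\times G\nsubseteq T(G)$ to $g\times G\nsubseteq T(G)$ is correct (though not needed: the paper works directly with $e\times e\notin T(G)$, which follows from the hypothesis by \cite[Remark 4.2]{K-N23}). The problem is that the two steps you yourself flag as ``the main obstacle'' are exactly where the proof lives, and as sketched one of them fails and the other is not carried out. For the torsion summand, your plan rests on the claim that $g\times e_p$ is a unit multiple of $p^{h_p(g)}e_p$. That is false in general: writing $\pi_p(g)=\alpha e_p$, where the exact power of $p$ dividing $\alpha$ is $p^{h_p(g)}$, and $\pi_p(e\times e)=c_pe_p$, the multiplication gives $g\times e_p=\alpha c_p e_p$, so $h_p(g\times e_p)=h_p(g)+k$ with $p^k$ the exact power of $p$ dividing $c_p$. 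The hypothesis $G\times G\nsubseteq T(G)$ constrains only the torsion-free part of $e\times e$; its $p$-component may have positive height (it can be an arbitrary element of $T_p(G)$), and then \emph{every} product $g\times x$ overshoots the height $h_p(g)$ at $p$, so $\mathbb{Z}(g\times e_p)\subsetneq p^{h_p(g)}T_p(G)$. The exact height can only be attained using $g$ itself: the paper first puts the torsion-free part $c_g\frac{r_1}{r_2}e_0$ of $g$ into $g\times G+\mathbb{Z}g$, subtracts it from $g$ to conclude $t_g\in g\times G+\mathbb{Z}g$, and only then applies Lemma \ref{lem2.1} to $t_g$, whose $p$-heights on $P'$ equal those of $g$ by construction.

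For the torsion-free summand you say it ``suffices to reach a unit multiple of $c_ge_0$,'' but you give no mechanism for doing so, and this is precisely the nontrivial arithmetic. Products $g\times x$ contribute elements whose torsion-free part carries the extra factor $c_\times=c(e\times e)$, a product of powers of primes from $P_\infty(\chi)$, while integer multiples of $g$ contribute multiples of $c_gr_1$ modulo torsion; neither family alone generates $c_gG_0$. The paper combines them by a B\'ezout argument: since the prime supports are disjoint, $\gcd(c_\times,nr_1k_2)=1$ (with $n=o(t_g)$), so one can solve $c_\times x+nr_1k_2y=k_1$ and assemble $c_g\frac{k_1}{k_2}e_0$ as $g\times\frac{z_1}{z_2}e_0+nyr_2g$. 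Without this coprimality observation your outline does not close; with it, and with the corrected treatment of the torsion part above, you would essentially recover the paper's proof.
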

\begin{proof}
	It is easy to see that
	\begin{equation}\label{eqq2.7}
		g \times G+\mathbb{Z}g\subseteq G(\text{\normalfont char}\,g).
	\end{equation}
	We will prove the reverse inclusion. Let $\text{\normalfont cochar}\,G=\chi$, $E=\{e_p~|~p\in P_\chi\}$ be a $\Pi$-basis of the group $G$, $e=(e_p)_{p\in P_\chi}$. Let $b\in G(\text{\normalfont char}\,g)$, $P_0$ be a set that is $g$-defining, $e\times e$-defining and $b$-defining with respect to the $\Pi$-basis $E$; such a set exists due to \cite[Remark 2.1(2)]{K-N23}. Since $g,e\times e\notin T(G)$ by \cite[Remark 4.2]{K-N23}, it follows that these elements can be written as 
	\begin{equation}\label{eqq2.8}
		g=c_g\dfrac{r_1}{r_2}e_0+t_g,
	\end{equation}
	\begin{equation*}
		e\times e=c_\times\dfrac{m_1}{m_2}e_0+t_\times,
	\end{equation*}
	where $c_g=c(g)\neq 0$, $c_\times=c(e\times e)\neq 0$, $r_i,m_i$ are $P\setminus P_0$-integers $(i=1,2)$, $e_0=\pi_{P_0}(e)$, $t_g,t_\times\in \bigoplus\limits_{p\in P'}T_p(G)$, $P'=P_\chi\setminus P_0$. The element $b$ can be represented in the form 
	\begin{equation}\label{eqq2.9}
		b=c_g\dfrac{s_1}{s_2}e_0+t_b,
	\end{equation}
	where $s_1$ is a $(P\setminus P_0)\cup P_\infty$-integer,  $s_2$ is a $P\setminus P_0$-integer, $t_b\in\bigoplus\limits_{p\in P'}T_p(G)$.

	We denote $L=\Big\{c_g\dfrac{k_1}{k_2}e_0~|~ k_1\in\mathbb{Z}, k_2\;\text{is a}\; P\setminus P_0\text{-integer} \Big\}$ and show that $L\subseteq g\times G+\mathbb{Z}g$. Let $c_g\dfrac{k_1}{k_2}e_0\in L$ and let $n=o(t_g)$. Then $n$  is a $P'$-integer, and it means that $gcd(c_\times, nr_1k_2)=1$. Consequently, $c_\times x+nr_1k_2y=k_1$ for somes $x,y\in\mathbb{Z}$. Multiplying both sides of this equatility by $\dfrac{c_g}{k_2}$, we obtain 
	$$c_gc_\times\dfrac{x}{k_2}+c_gnr_1y=c_g\dfrac{k_1}{k_2},$$
	thus
	\begin{equation}\label{eqq2.10}
		c_g\dfrac{r_1}{r_2}  c_\times\dfrac{m_1}{m_2}  \dfrac{r_2m_2x}{r_1m_1k_2}+c_gnr_1y=c_g\dfrac{k_1}{k_2}.
	\end{equation}
	We set $z_1=r_2m_2x,z_2=r_1m_1k_2\in\mathbb{Z}$, then $z_2$ is a $P\setminus P_0$-integer. From \eqref{eqq2.10} we obtain 
	\begin{equation*}
		\begin{array}{ll}
			c_g\dfrac{k_1}{k_2}e_0&=\Big(c_g\dfrac{r_1}{r_2}e_0+t_g\Big ) \times \dfrac{z_1}{z_2}e_0+nyr_2\Big(c_g\dfrac{r_1}{r_2}e_0+t_g\Big )\\
			&=g\times \dfrac{z_1}{z_2}e_0+nyr_2g\in g\times G +\mathbb{Z}g.
		\end{array}
	\end{equation*}
	Therefore,
	\begin{equation}\label{eqq2.11}
		L\subseteq g\times G+\mathbb{Z}g.
	\end{equation}
	From \eqref{eqq2.11} we get $c_g\dfrac{r_1}{r_2}e_0\in g\times G+\mathbb{Z}g$. Since $g\in g\times G+\mathbb{Z}g$, it follows that 
	\begin{equation}\label{eqq2.11a}
		t_g\in g\times G+\mathbb{Z}g
	\end{equation} 
	by \eqref{eqq2.8}. Because $t_g\in T(G)$, we have $\bigoplus\limits_{p\in P}p^{h_p(t_g)}T_p(G)=\mathbb{Z}t_g\subseteq g\times G+\mathbb{Z}g$ by Lemma \ref{lem2.1} and \eqref{eqq2.11a}. Since $h_p(t_g)=h_p(g)$ for $p\in P'$ and $h_p(t_g)=\infty$ for $p\in P\setminus P'$, it follows that
	\begin{equation}\label{eqq2.12}
		\bigoplus\limits_{p\in P'}p^{h_p(g)}T_p(G)\subseteq g\times G+\mathbb{Z}g.
	\end{equation}
	From \eqref{eqq2.11} we obtain
	\begin{equation}\label{eqq2.13}
		c_g\dfrac{s_1}{s_2}e_0\in g\times G+\mathbb{Z}g.
	\end{equation}
	Since $b\in G(\text{\normalfont char}\,g)$, it follows that $t_b\in G(\text{\normalfont char}\,g)$. This means $t_b\in\bigoplus\limits_{p\in P'}p^{h_p(g)}T_p(G)$, hence
	\begin{equation}\label{eqq2.14}
		t_b\in g\times G+\mathbb{Z}g
	\end{equation}
	by \eqref{eqq2.12}. It follows from \eqref{eqq2.9}, \eqref{eqq2.13} and \eqref{eqq2.14} that $b\in g\times G+\mathbb{Z}g$, hence
	\begin{equation}\label{eqq2.15}
		G(\text{\normalfont char}\,g)\subseteq g\times G+\mathbb{Z}g.
	\end{equation}
	From \eqref{eqq2.7} and \eqref{eqq2.15} we conclude that $g\times G+\mathbb{Z}g=G(\text{\normalfont char}\,g).$
\end{proof}

Now we can describe the principal ideals of rings on groups in $\mathcal{QD}1$. Let $g\in G$, we denote by $(g)_\times$ the ideal of the ring $(G,\times)$ generated by $g$.

\begin{theorem}\label{the2.3}
	Let $G\in\mathcal{QD}1$, $\text{\normalfont cochar}\,G=\chi$, and let $(G,\times)$ be a ring, $g\in G$. 
	\begin{itemize}
		\item[1)] If $g\in T(G)=T$, then $(g)_\times =T(\text{\normalfont char}\,g)$. In addition, $T(\text{\normalfont char}\,g)=G(\text{\normalfont char}\,g)$ if and only if $G\in\mathcal{RQD}1$.
		\item[2)] If $g\notin T(G)$, $G\times G\nsubseteq T(G)$, then $(g)_\times =G(\text{\normalfont char}\,g)$.
		\item[3)] If $g\notin T(G)$, $G\times G\subseteq T(G)$, then $(g)_\times=\bigoplus\limits_{p\in P}p^{h_p(g\times e)}T_p(G)+\mathbb{Z}g$, where $\{e\}$ is a basis of $G$.
	\end{itemize}
\end{theorem}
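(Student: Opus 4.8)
The backbone of the argument is the observation that, since every ring on a group in $\mathcal{QD}1$ is commutative and associative, the principal ideal generated by $g$ is simply
\[
(g)_\times = \mathbb{Z}g + g\times G,
\]
where $g\times G=\{g\times h \mid h\in G\}$ is the image of the endomorphism $h\mapsto g\times h$, hence a subgroup. Indeed this set contains $g$, is an additive subgroup, and is closed under multiplication by $G$ because $(g\times h)\times h'=g\times(h\times h')$ by associativity and commutativity. I would establish this formula first and then treat the three cases.

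For part 1) ($g\in T$) I would first note $g\times G\subseteq T$, since $o(g)\,(g\times h)=(o(g)\,g)\times h=0$. Using $h_p(g\times h)\geq h_p(g)$ together with $g\times h\in T$ and $\mathbb{Z}g\subseteq T(\text{char}\,g)$ gives $(g)_\times\subseteq T(\text{char}\,g)$. For the reverse inclusion I would invoke Lemma~\ref{lem2.1}: since $T(G)=\bigoplus_p T_p(G)$ has cyclic $p$-components one checks $T(\text{char}\,g)=\bigoplus_p p^{h_p(g)}T_p(G)$, and the lemma identifies this with $\mathbb{Z}g\subseteq(g)_\times$; thus $(g)_\times=T(\text{char}\,g)=\mathbb{Z}g$. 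For the equivalence with $G(\text{char}\,g)$, the ``if'' direction follows from Remark~\ref{rem2.1} with $c_g=c(g)=0$, which collapses $G(\text{char}\,g)$ to $\bigoplus_p p^{h_p(g)}T_p(G)=T(\text{char}\,g)$; the ``only if'' direction follows because a non-reduced $G\cong\mathbb{Q}\oplus\mathbb{Z}_m$ has its divisible part $\mathbb{Q}$ inside $G(\text{char}\,g)$ (all heights being $\infty$) but not inside $T$, so the two subgroups differ.

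For part 2), when $G\in\mathcal{RQD}1$ the statement is immediate from Lemma~\ref{lem2.2}, which already gives $\mathbb{Z}g+g\times G=G(\text{char}\,g)$. The non-reduced case $G\cong\mathbb{Q}\oplus\mathbb{Z}_m$ requires separate but elementary bookkeeping: since $\mathbb{Q}\otimes\mathbb{Z}_m=0$ and $\operatorname{Hom}(\mathbb{Z}_m,\mathbb{Q})=\operatorname{Hom}(\mathbb{Q},\mathbb{Z}_m)=0$, every multiplication makes $\mathbb{Q}$ and $\mathbb{Z}_m$ orthogonal, so the ring splits as a product of a ring on $\mathbb{Q}$ and a ring on $\mathbb{Z}_m$; the hypothesis $G\times G\nsubseteq T$ forces the multiplication on $\mathbb{Q}$ to be nonzero, whence $\mathbb{Q}\times q=\mathbb{Q}$ for $q\neq 0$. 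Writing $g=(q,t)$ one then computes $(g)_\times=\mathbb{Q}\oplus(t)_{\mathbb{Z}_m}=\mathbb{Q}\oplus\mathbb{Z}t$, which coincides with $G(\text{char}\,g)$.

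Part 3) is the heart of the matter and where I expect the main difficulty. Here $g\times G\subseteq T$, and since $g\notin T$ one has $\mathbb{Z}g\cap T=0$, so the claimed equality is equivalent to $g\times G=\mathbb{Z}(g\times e)$. The inclusion $\bigoplus_p p^{h_p(g\times e)}T_p(G)=\mathbb{Z}(g\times e)\subseteq g\times G$ is immediate from Lemma~\ref{lem2.1} applied to the torsion element $g\times e$. The reverse inclusion is the crux, and I would argue prime by prime. For each prime $p$ with $T_p(G)\neq 0$ the element $e_p$ is torsion, so $\pi_p(h)\in G$ for every $h\in G$; splitting $h=\pi_p(h)+(h-\pi_p(h))$ gives
\[
\pi_p(g\times h)=g\times\pi_p(h)+\pi_p\!\left(g\times(h-\pi_p(h))\right).
\]
The first summand is an integer multiple of $g\times e_p=\pi_p(g\times e)$ (the case $h=e$), while $h-\pi_p(h)$ has infinite $p$-height, so $g\times(h-\pi_p(h))$ is a torsion element of infinite $p$-height whose $p$-component vanishes. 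Hence $\pi_p(g\times h)\in\mathbb{Z}\,\pi_p(g\times e)$ and, comparing $p$-heights in the cyclic group $T_p(G)$, $h_p(g\times h)\geq h_p(g\times e)$ for every $p$. This yields $g\times h\in\bigoplus_p p^{h_p(g\times e)}T_p(G)=\mathbb{Z}(g\times e)$, completing the proof. The delicate point, which I would write out most carefully, is the vanishing of the $p$-component of products involving infinitely $p$-divisible factors, as this is exactly what makes the multiplication compatible with the $p$-primary grading of $T(G)$.
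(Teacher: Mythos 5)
Your proof is correct and, in parts 1) and 2), takes essentially the paper's route: the reduction to $(g)_\times=\mathbb{Z}g+g\times G$ via commutativity and associativity, Lemma \ref{lem2.1} for torsion $g$, Lemma \ref{lem2.2} for reduced $G$ with $g\notin T(G)$, and the splitting of $G=\mathbb{Q}\oplus\mathbb{Z}_m$ into orthogonal ideals in the non-reduced case. The genuine divergence is in part 3). The paper imports from \cite[Remark 4.2]{K-N23} a ring-theoretic decomposition $G=A\oplus B$ with $A\times G=0$ and $B$ finite; writing $g=a+me_1$, $x=c+ne_1$ then gives $g\times x=mn(e_1\times e_1)$ and $g\times e=m(e_1\times e_1)$ in one line, so $h_p(g\times x)\geq h_p(g\times e)$ is immediate. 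You instead prove this inequality prime by prime, using that $\pi_p(G)\subseteq G$ for the primes with $0<\chi(p)<\infty$, that $\pi_p(h)$ is an integer multiple of $e_p$, and that $h-\pi_p(h)$ has infinite $p$-height, so the $p$-component of $g\times(h-\pi_p(h))$ must vanish in the finite cyclic group $T_p(G)$. Your version is more self-contained (it does not lean on the companion paper's decomposition) at the cost of the divisibility bookkeeping you rightly flag as the delicate point; both arguments then finish identically via Lemma \ref{lem2.1} applied to $g\times e$. One small repair: your part-3 argument is phrased entirely in terms of the $\Pi$-basis, i.e. for $G\in\mathcal{RQD}1$, whereas the theorem is stated for all of $\mathcal{QD}1$; for $G=\mathbb{Q}\oplus\mathbb{Z}_m$ the same projections work, with $\mathbb{Q}$ absorbed into the $p$-divisible complement of $T_p(G)$, but a sentence saying so should be added.
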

\begin{proof}
	1) Let $g\in T(G)$. Then $T(\text{\normalfont char}\,g)=\bigoplus\limits_{p\in P}p^{h_p(g)}T_p(G)$. Since $g\in T(\text{\normalfont char}\,g)$ and $T(\text{\normalfont char}\,g)$ is a fully invariant subgroup of $G$, it follows that $(g)_\times\subseteq T(\text{\normalfont char}\,g)$. Since $g\in T(G)$, using Lemma \ref{lem2.1}, we obtain $T (\text{\normalfont char}\,g)=\mathbb{Z}g\subseteq (g)_\times$. 
	
	It is easy to see that $T(\text{\normalfont char}\,g)=G(\text{\normalfont char}\,g)$ if $G\in\mathcal{RQD}1$, $T(\text{\normalfont char}\,g)\neq G(\text{\normalfont char}\,g)$ if $G\in\mathcal{QD}1\setminus\mathcal{RQD}1$.

	2) Let $g \notin T(G)$, $G \times G \nsubseteq T(G)$. Since every multiplication on $G$ is associative and commutative \cite[Theorem 3.1(6)]{K-N23}, it follows that $(g)_\times = g \times G + \mathbb{Z}g$. If $G \in \mathcal{RQD}1$, then $(g)_\times = G(\text{\normalfont char}\, g)$ by Lemma \ref{lem2.2}.

	Let $G\in\mathcal{QD}1\setminus\mathcal{RQD}1$. Is is easy to see that  $(g)_\times\subseteq G(\text{\normalfont char}\,g)$. To prove the reverse inclusion, we represent the group $G$ in the form $G=\mathbb{Q}\oplus\mathbb{Z}_m$, where $m\in\mathbb{N}$. This decomposition is a decomposition of the ring $(G, \times)$ into the direct sum of ideals. Then $g = a + b$, where $a \in \mathbb{Q} \setminus \{0\}$, $b \in \mathbb{Z}_m$. 
	Since $G \times G \nsubseteq T(G)$, the ideal $\mathbb{Q}$ is isomorphic to the field of rational numbers and contains $a$. Consequently, $\mathbb{Q} \subseteq (g)_\times$, hence $\mathbb{Z}b \subseteq (g)_\times$. Thus, we obtain $G(\text{\normalfont char}\,g)=\mathbb{Q}\oplus \mathbb{Z}b\subseteq (g)_\times$.

	3) Let $g\notin T(G)$ and $G\times G\subseteq T(G)$. Then from \cite[Remark 4.2]{K-N23} it follows that there exist groups $A$ and $B$ such that $G=A\bigoplus B$ and $A\times G=0$, $B$ is a finite group. If $\{e\}$ is a basis of $G$, then $e=e_0+e_1$,  where $e_0\in A$ and $B=\mathbb{Z}e_1$, $o(e_1)<\infty$. Let $x\in G$. Then the elements $g$ and $x$ can be written in the form  $g=a+me_1$, $x=c+ne_1$, where $a,c\in A$, $m,n\in\mathbb{Z}$. We have $g\times x=mn(e_1\times e_1)$, $g\times e=m(e_1\times e_1)$. Thus  $h_p(g\times x)\geq h_p(g\times e)$ for any  $p\in P$. Therefore, $(g)_\times =g\times G+\mathbb{Z}g\subseteq \bigoplus\limits_{p\in P}p^{h_p(g\times e)}T_p(G)+\mathbb{Z}g$.
	
	To prove the reverse inclusion, we note that  $g\times e\in T(G)$. Therefore, according to Lemma \ref{lem2.1} we conclude that $\bigoplus\limits_{p\in P}p^{h_p(g\times e)}T_p(G)=\mathbb{Z}(g\times e)\subseteq (g)_\times$.
\end{proof}

\section{$AI$-rings and $FI$-rings on quotient divisible groups of rank 1}\label{sec3}
In this section we consider questions related to absolute ideals of groups in $\mathcal{QD}1$. In \cite{Fried64} a subgroup $F=\langle \text{\normalfont Im}\,\psi~|~\psi \in \text{\normalfont Hom}(G,\text{\normalfont End}\,G)\rangle$ of the endomorphism group $\text{\normalfont End}\,G$ was defined and it was proved that $F$ is an ideal of the endomorphism ring $E(G)$ of the group $G$. In addition, a fully invariant subgroup of $G$ is a group which is invariant with respect to $E(G)$, and an absolute ideal of $G$ is a subgroup which is invariant with resspect to $F$. So an absolute ideal is not necessarily a fully invariant subgroup. Recall that a group in which every absolute ideal is a fully invariant subgroup is called an $afi$-group. In a $nil$-group any subgroup is an absolute ideal, but some subgroups can not be fully invariant. For example, we consider a torsion-free group $G$ of rank 1 whose type $t$ is non-idempotent and $t(p)=\infty$ for some $p\in P$. Then $G$ is a $nil$-group. Let $g$ be any nonzero element of $G$ and let $\mathbb{Z}g$ be the cyclic group generated by $g$. Since $G$ is a $nil$-group, $\mathbb{Z}g$ is an absolute ideal of $G$ as noticed above, but $\mathbb{Z}g$ is not fully invariant in $G$ because $p^{-1}G\subseteq G$, but $p^{-1}g\notin \mathbb{Z}g$. Generalizing this example, we note that for any group $G$ every subgroup of the absolute annihilator $\text{\normalfont Ann}^*\,G$ of $G$ is an absolute ideal of $G$. In \cite{F12} it was shown that if $G$ is a torsion group, then $\text{\normalfont Ann}^*\,G$ coincides with the first Ulm subgroup $G^1=\bigcap\limits_{n\in\mathbb{N}}nG$ of $G$. This allowed in \cite{McL75a} to prove  that a separable torsion group $G$ is an $afi$-group if and only if $G^1$ is a cyclic group. More complicated examples of absolute ideals of a group $G$ that are not fully invariant subgroups of $G$ and are not contained in $\text{\normalfont Ann}^*\,G$ were given in \cite{K-Ph19}.


The first aim of this section is to prove that every quotient divisible group of rank 1 is an $RFI$-group, an $RAI$-group and an $afi$-group. To obtain these results, it is not necessary to describe the absolute ideals of the groups in $\mathcal{QD}1$; it is sufficient to use the relations between these classes proved in \cite{K-T22b}.
The intersection of the classes of $RFI$-groups, $RAI$-groups and $afi$-groups contains the class of $E$-groups, which were introduced by P. Schultz in \cite{Schultz88}. $E$-groups arise naturally in the theory of abelian groups when we consider groups isomorphic to their endomorphism groups. A group $G$ is called an $E$-group if $G$ is isomorphic to the endomorphism group $\text{\normalfont End}\,G$ and the endomorphism ring $E(G)$ is commutative. In \cite[Theorem 5.3]{K-T-T21}, it was shown that a group $G$ is an $E$-group if and only if every ring on $G$ is associative and $G$ admits the structure of a unital ring.

Let $\mathcal{E}$ be the class of all $E$-groups, $\mathcal{RFI}$ be the class of all $RFI$-groups, $\mathcal{RAI}$ be the class of all $RAI$-groups  and $\mathcal{AFI}$ be the class of all $afi$-groups. It was shown in \cite[Theorem 2.1]{K-T22b} that  $\mathcal{E}\subseteq \mathcal{RFI}=\mathcal{RAI}\cap \mathcal{AFI}.$

In Proposition \ref{pro3.1}, we will show that every group $G \in \mathcal{QD}1$ is an $E$-group. It follows that $G$ is an $RFI$-group, an $RAI$-group, and an $afi$-group.

\begin{proposition}\label{pro3.1}
	Every quotient divisible group of rank 1 is an $E$-group,
\end{proposition}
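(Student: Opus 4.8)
The plan is to use the criterion from \cite[Theorem 5.3]{K-T-T21}: a group $G$ is an $E$-group if and only if every ring on $G$ is associative and $G$ admits the structure of a unital ring. The associativity half is immediate, since the paper has already recorded (following \cite{K-N23}) that every ring on a group $G\in\mathcal{QD}1$ is associative and commutative. So the entire burden of the proof is to exhibit one unital ring structure on each $G\in\mathcal{QD}1$. I would split into the two structural cases already used throughout Section~\ref{sec2}.

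For the non-reduced case $G\in\mathcal{QD}1\setminus\mathcal{RQD}1$, the excerpt records $G\cong\mathbb{Q}\oplus\mathbb{Z}_m$ for some $m\in\mathbb{N}$. Here the construction is transparent: $\mathbb{Q}$ is the field of rationals and $\mathbb{Z}_m$ is the unital ring $\mathbb{Z}/m\mathbb{Z}$, so their ring direct sum is a unital ring on $G$, with identity $(1,\bar 1)$. This disposes of the non-reduced groups with essentially no work.

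The substantive case is $G\in\mathcal{RQD}1$, where $\text{\normalfont cochar}\,G=\chi$ lies in a non-zero type and $G=\langle e,T(\mathbb{Z}_\chi)\rangle_\ast$ with $\Pi$-basis $E=\{e_p~|~p\in P_\chi\}$ as in \eqref{eqq2.1}--\eqref{eqq2.2}. The natural candidate is the ring structure for which $e$ is the identity, equivalently the multiplication induced componentwise from the $p$-adic rings $\widehat{\mathbb{Z}}_p$ by declaring $e_p\times e_p=e_p$ (so each $\widehat{\mathbb{Z}}_pe_p$ carries its own unital ring structure and $e=(e_p)_p$ is the unit). I would first check that this componentwise multiplication on $\mathbb{Z}_\chi=\prod_{p\in P}\widehat{\mathbb{Z}}_pe_p$ restricts to a well-defined multiplication on the pure subgroup $G$; then verify directly that $e\times g=g$ for all $g\in G$, so that $(G,\times)$ is unital. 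Because the multiplication is built from associative commutative ring structures in each coordinate, associativity and commutativity are inherited coordinatewise, confirming consistency with the general facts cited above.

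The main obstacle I anticipate is the well-definedness step in the reduced case: one must confirm that the product of two elements of $G=\langle e,T(\mathbb{Z}_\chi)\rangle_\ast$ again lands in $G$ rather than merely in the ambient product $\mathbb{Z}_\chi$. I would handle this by writing a general element in the normal form \eqref{eqq2.6}, namely $g=c(g)re_0+t$ with $r$ a $P\setminus P_0$-fraction and $t\in\bigoplus_{p\in P'}\widehat{\mathbb{Z}}_pe_p$ torsion, multiply two such representatives, and check that each resulting coordinate has the height and denominator constraints that membership in $G$ requires. The torsion part stays inside $T(\mathbb{Z}_\chi)\subseteq G$, and the purity of $G$ together with the fraction bookkeeping controls the free part. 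Once well-definedness is secured, the remaining verifications are the routine coordinatewise checks, and the proposition follows from the \cite{K-T-T21} criterion.
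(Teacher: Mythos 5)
Your proposal follows essentially the same route as the paper: both apply the criterion of \cite[Theorem 5.3]{K-T-T21}, take associativity of all multiplications on $G\in\mathcal{QD}1$ from \cite{K-N23}, and exhibit a unital ring whose identity is the basis element $e$. The only difference is that the paper obtains that unital ring directly by citing \cite[Theorems 3.1 and 3.2]{K-N23} (existence and uniqueness of the ring with $e\cdot e=e$, which is unital with unity $e$), whereas you propose to reconstruct it by hand via the normal form \eqref{eqq2.6}; the well-definedness check you single out as the main obstacle is precisely the content of the cited result, so your elaboration is correct but not needed.
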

\begin{proof}
	Let $G$ be a group in $\mathcal{QD}1$ with the basis $\{e\}$. According to \cite[Theorem 3.2]{K-N23}, there exists a unique ring $(G,\cdot)$ in which $e\cdot e=e$. By \cite[Theorem 3.1]{K-N23} the ring $(G,\cdot)$ is a ring with the unity $e$.
	Since all multiplications on $G$ are associative by \cite[Theorem 3.1]{K-N23}, we obtain that $G$ is an $E$-group by \cite[Theorem 5.3 ]{K-T-T21}. 
\end{proof}

\begin{corollary}\label{cor3.2}
	Every quotient divisible group of rank 1 is an $RAI$-group, an $RFI$-group and an $afi$-group. \hfill$\square$\smallskip
\end{corollary}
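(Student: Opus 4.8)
The plan is to obtain this as an immediate consequence of Proposition \ref{pro3.1} together with the class inclusion $\mathcal{E}\subseteq\mathcal{RFI}=\mathcal{RAI}\cap\mathcal{AFI}$ recalled from \cite[Theorem 2.1]{K-T22b}. No new construction is required: all the substantive work has already been carried out in establishing that every group in $\mathcal{QD}1$ carries a unital, associative ring structure, which is precisely what makes it an $E$-group.

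First I would fix an arbitrary $G\in\mathcal{QD}1$ and invoke Proposition \ref{pro3.1} to conclude that $G$ is an $E$-group, i.e. $G\in\mathcal{E}$. Next I would apply the chain $\mathcal{E}\subseteq\mathcal{RFI}=\mathcal{RAI}\cap\mathcal{AFI}$. From $G\in\mathcal{E}$ and $\mathcal{E}\subseteq\mathcal{RFI}$ I get $G\in\mathcal{RFI}$, so $G$ is an $RFI$-group. The equality $\mathcal{RFI}=\mathcal{RAI}\cap\mathcal{AFI}$ then forces $G\in\mathcal{RAI}$ and $G\in\mathcal{AFI}$ simultaneously, which says exactly that $G$ is an $RAI$-group and an $afi$-group.

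Thus all three assertions follow at once, and there is essentially no obstacle to overcome at this stage: the corollary is a purely formal deduction once Proposition \ref{pro3.1} is in hand. The only point worth verifying carefully is that the ring exhibited in the proof of Proposition \ref{pro3.1} (the unique ring $(G,\cdot)$ with $e\cdot e=e$) genuinely satisfies both hypotheses of the $E$-group criterion of \cite[Theorem 5.3]{K-T-T21}, namely associativity of every ring on $G$ and the existence of a unital ring structure; but this has already been secured via \cite[Theorem 3.1]{K-N23}, so the conclusion is justified.
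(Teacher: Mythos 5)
Your proof is correct and is exactly the argument the paper intends: Proposition \ref{pro3.1} gives $G\in\mathcal{E}$, and the inclusion $\mathcal{E}\subseteq\mathcal{RFI}=\mathcal{RAI}\cap\mathcal{AFI}$ from \cite[Theorem 2.1]{K-T22b} yields all three properties at once. The paper leaves the corollary without a written proof precisely because it is this immediate formal deduction.
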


Note that Corollary \ref{cor3.2} does not answer the question: which multiplications on groups in $\mathcal{QD}1$ determine AI-rings and $FI$-rings. To answer this question, we describe principal absolute ideals of groups $G\in\mathcal{QD}1$ (Theorem \ref{the3.3}). This allows us in Theorem \ref{the3.4} to describe the rings on the group $G$ in which all ideals are absolute ideals (respectively, fully invariant subgroups) of $G$, i.e. those rings on $G$ that are $AI$-rings (respectively, $FI$-rings). The description of principal absolute ideals of a group allows us to describe any of its absolute ideal, since any absolute ideal is the sum of principal absolute ideals.

\begin{theorem}\label{the3.3}
	Let $G\in\mathcal{QD}1$, $T(G)=T$, $g\in G$, $\langle g\rangle_{AI}$ be the absolute ideal of the group $G$ generated by the element $g$. Then $(g)_{AI}=G(\text{\normalfont char}\,g)$ if $g\notin T$; $(g)_{AI}=T(\text{\normalfont char}\,g)$ if $g\in T$.
\end{theorem}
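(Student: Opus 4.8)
The plan is to squeeze $(g)_{AI}$ between two subgroups that coincide, obtaining an upper bound from a conveniently chosen fully invariant subgroup and a matching lower bound from the principal ideals already computed in Theorem \ref{the2.3}. Two facts drive everything: first, every fully invariant subgroup of $G$ is an absolute ideal (recalled in the introduction via Fried's characterization), so any fully invariant subgroup containing $g$ is an upper bound for $(g)_{AI}$; second, $(g)_{AI}$, being an absolute ideal, is an ideal in \emph{every} ring on $G$, and therefore, containing $g$, it must contain the principal ideal $(g)_\times$ for each multiplication $\times$.

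For the upper bound I would argue as follows. Since $G(\text{\normalfont char}\,g)$ is fully invariant and contains $g$, it is an absolute ideal, so $(g)_{AI}\subseteq G(\text{\normalfont char}\,g)$ in all cases. When $g\in T$, the subgroup $T(\text{\normalfont char}\,g)=T\cap G(\text{\normalfont char}\,g)$ is an intersection of two fully invariant subgroups, hence itself fully invariant, hence an absolute ideal containing $g$; thus $(g)_{AI}\subseteq T(\text{\normalfont char}\,g)$ in the torsion case.

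For the lower bound I would use that $(g)_{AI}\supseteq (g)_\times$ for every ring $(G,\times)$. If $g\in T$, Theorem \ref{the2.3}(1) gives $(g)_\times=T(\text{\normalfont char}\,g)$ in \emph{every} ring, so $T(\text{\normalfont char}\,g)\subseteq (g)_{AI}$ immediately, and combined with the upper bound this yields $(g)_{AI}=T(\text{\normalfont char}\,g)$. If $g\notin T$, I would invoke Proposition \ref{pro3.1}: since $G$ is an $E$-group, it carries a unital ring $(G,\cdot)$ whose identity is the basis element $e$. Then $e\cdot e=e\notin T$, so $G\cdot G\nsubseteq T$, and Theorem \ref{the2.3}(2) applies to give $(g)_\cdot=G(\text{\normalfont char}\,g)$. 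Hence $G(\text{\normalfont char}\,g)=(g)_\cdot\subseteq (g)_{AI}$, and together with the upper bound $(g)_{AI}=G(\text{\normalfont char}\,g)$.

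The only place where the structure of $\mathcal{QD}1$ is genuinely used---and the step I expect to be the crux---is this realizability argument for $g\notin T$: one must exhibit a single ring on $G$ in which the principal ideal generated by $g$ is already as large as $G(\text{\normalfont char}\,g)$. The existence of a unital ($E$-ring) structure supplies exactly this, forcing $G\cdot G\nsubseteq T$ and thereby activating case (2) of Theorem \ref{the2.3}. By contrast, the torsion case requires no such construction, because there the principal ideal is independent of the multiplication.
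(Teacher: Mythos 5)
Your proposal is correct and follows essentially the same route as the paper: sandwich $(g)_{AI}$ between the fully invariant upper bound $G(\text{\normalfont char}\,g)$ (resp.\ $T(\text{\normalfont char}\,g)$) and the lower bound given by a principal ideal $(g)_\times$ computed via Theorem \ref{the2.3}, realizing $G\times G\nsubseteq T(G)$ by an explicit ring. The only cosmetic difference is that the paper invokes \cite{K-N23} directly for the existence of such a ring, whereas you route through the unital ring of Proposition \ref{pro3.1} --- the same object.
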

\begin{proof}
	Let $g\notin T(G)$. Since $G\neq T(G)$  for any group  $G\in\mathcal{QD}1$, it follows from \cite[Theorem 3.1]{K-N23} that there exists a ring $(G,\times)$ such that $G\times G\nsubseteq T(G)$. By Theorem \ref{the2.3} we have $(g)_\times=G(\text{\normalfont char}\,g)$. Therefore, $G(\text{\normalfont char}\,g)\subseteq (g)_{AI}$. Since $G(\text{\normalfont char}\,g)$ is a fully invariant subgroup of the group $G$ and $(g)_{AI}$  is the smallest absolute ideal of the group $G$ containing $g$, we have $(g)_{AI}=G(\text{\normalfont char}\,g)$.
	
	If $g \in T(G)$, then by replacing the group $G(\text{\normalfont char}\, g)$ with $T(\text{\normalfont char}\, g)$ in the previous arguments, we obtain that $(g)_{AI} = T(\text{\normalfont char}\, g)$.
\end{proof}

\begin{theorem}\label{the3.4}
	Let $G\in\mathcal{QD}1$, $\{e\}$ be a basis of $G$.
	\begin{itemize}
		\item[1)] If $\text{\normalfont cochar}\, G = (\infty, \infty, \cdots, \infty, \cdots)$, then every ring on $G$ is an $FI$-ring (and, consequently, an $AI$-ring).
		\item[2)] If $\text{\normalfont cochar}\, G \neq (\infty, \infty, \cdots, \infty, \cdots)$ and $(G, \times)$ is a ring, then the following conditions are equivalent:
		\begin{enumerate}
			\item[a)] $(G,\times)$ is an $FI$-ring,
			\item[b)] $(G,\times)$ is an $AI$-ring,
			\item[c)] $e\times e\notin T(G)$,
			\item[d)] $G\times G\nsubseteq T(G)$.
		\end{enumerate}
	\end{itemize}
\end{theorem}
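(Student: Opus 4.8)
The plan is to treat the two parts separately, Part 1 being the exceptional case and Part 2 the generic one. Throughout I would use Theorem \ref{the2.3} (principal ideals of a fixed ring), Theorem \ref{the3.3} (principal absolute ideals), and Corollary \ref{cor3.2} (every $G\in\mathcal{QD}1$ is an $afi$-group, so its absolute ideals are exactly its fully invariant subgroups). The last fact already yields the equivalence a) $\Leftrightarrow$ b) in Part 2, and reduces the whole question to comparing the ideals of $(G,\times)$ with the fully invariant subgroups of $G$.

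For Part 1, I would first unwind the construction \eqref{eqq2.1}--\eqref{eqq2.2} for $\chi=(\infty,\infty,\dots)$: then $o(e_p)=\infty$ for every $p$, so $T(\mathbb{Z}_\chi)=0$ and $G=\langle e\rangle_\ast$ is the pure closure of $\mathbb{Z}e$ in a torsion-free group, which forces $G\cong\mathbb{Z}$. Since $\operatorname{End}\mathbb{Z}=\mathbb{Z}$, every subgroup of $G$ is fully invariant; hence, for an arbitrary multiplication $\times$, every ideal of $(G,\times)$ is in particular a subgroup of $G$ and so is fully invariant, making $(G,\times)$ an $FI$-ring (and therefore an $AI$-ring). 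This case is genuinely special: here $T(G)=0$, so the dichotomy $e\times e\in T(G)$ versus $e\times e\notin T(G)$ collapses and even the zero ring is an $FI$-ring, which is exactly why it cannot be folded into the equivalences of Part 2.

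For Part 2, the core is the chain a) $\Rightarrow$ b) $\Rightarrow$ d) $\Leftrightarrow$ c) $\Rightarrow$ a). The equivalence c) $\Leftrightarrow$ d) is the easy structural step: c) $\Rightarrow$ d) is immediate since $e\times e\in G\times G$, and for d) $\Rightarrow$ c) I would argue contrapositively --- if $e\times e\in T(G)$ then, writing any $g$ with $ng\in\mathbb{Z}e+T(G)$ and using that $e\times(\text{torsion})$ is torsion, one obtains $e\times g\in T(G)$ and then $g\times h\in T(G)$ for all $g,h$, i.e.\ $G\times G\subseteq T(G)$ (equivalently one may invoke \cite[Remark 4.2]{K-N23}). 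For d) $\Rightarrow$ a) I would use Theorem \ref{the2.3}: when $G\times G\nsubseteq T(G)$, every principal ideal $(g)_\times$ equals $G(\operatorname{char}g)$ for $g\notin T(G)$ and $T(\operatorname{char}g)$ for $g\in T(G)$, both fully invariant; since every ideal is a sum of principal ideals and a sum of fully invariant subgroups is fully invariant, $(G,\times)$ is an $FI$-ring. Finally a) $\Rightarrow$ b) is trivial, as fully invariant subgroups are absolute ideals.

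The remaining implication b) $\Rightarrow$ d), proved as $\neg$d) $\Rightarrow\neg$b), is where the hypothesis $\operatorname{cochar}G\neq(\infty,\dots)$ does its work, and I expect it to be the main obstacle. Assuming $G\times G\subseteq T(G)$, \cite[Remark 4.2]{K-N23} (as in the proof of Theorem \ref{the2.3}(3)) gives $G=A\oplus B$ with $A\times G=0$ and $B$ finite; consequently every subgroup of $A$ is an ideal of $(G,\times)$. I would then exhibit an element $a\in A\setminus T(G)$ for which $\mathbb{Z}a$ is not an absolute ideal. By Theorem \ref{the3.3}, $(a)_{AI}=G(\operatorname{char}a)$, so $\mathbb{Z}a$ is an absolute ideal iff $G(\operatorname{char}a)=\mathbb{Z}a$, and the task is to rule this out. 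By Remark \ref{rem2.1}, $G(\operatorname{char}a)=c_aG_0\oplus\bigoplus_{p}p^{h_p(a)}T_p(G)$ has torsion-free rank $1$, and the point is that it is non-cyclic whenever $G\not\cong\mathbb{Z}$: if $G$ has torsion one takes $a$ of zero height at a prime $q$ with $T_q(G)\neq0$, so that $G(\operatorname{char}a)$ acquires a nonzero torsion summand, while if $G$ is torsion-free (which here forces $G=\mathbb{Q}$, since $\mathbb{Z}$ is excluded) any $a\neq0$ gives $G(\operatorname{char}a)=\mathbb{Q}\neq\mathbb{Z}a$. In every case $\mathbb{Z}a\subsetneq G(\operatorname{char}a)=(a)_{AI}$, so $\mathbb{Z}a$ is an ideal that is not an absolute ideal; hence $(G,\times)$ is not an $AI$-ring. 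This establishes $\neg$d) $\Rightarrow\neg$b) and closes the cycle a) $\Leftrightarrow$ b) $\Leftrightarrow$ c) $\Leftrightarrow$ d).
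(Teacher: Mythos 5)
Your overall architecture (Part 1 via $G\cong\mathbb{Z}$; the equivalence c)$\,\Leftrightarrow\,$d); d)$\,\Rightarrow\,$a) via Theorem \ref{the2.3} and sums of principal ideals; a)$\,\Leftrightarrow\,$b) via the $afi$-property from Corollary \ref{cor3.2}) matches the paper, and those steps are fine. The gap is in $\neg$d)$\,\Rightarrow\,\neg$b), exactly where you anticipated the difficulty. Having written $G=A\oplus B$ with $A\times G=0$ and $B$ finite, your witness $a$ must be taken in $A$ (otherwise $\mathbb{Z}a$ need not be an ideal: $a\times G\subseteq T(G)$ and $\mathbb{Z}a\cap T(G)=0$ force $a\times G=0$, i.e.\ $a$ must annihilate $G$). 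But $A=\pi_{P_0}(G)$ is $q$-divisible for every $q$ in the finite set $P_1=P\setminus P_0$, and $P_1$ must contain $P_\times$, so it may contain \emph{every} prime $q$ with $T_q(G)\neq 0$. Concretely: take $\text{\normalfont cochar}\,G=\chi$ with $\chi(2)=1$, $\chi(3)=\infty$, $\chi(p)=0$ otherwise, and the multiplication with $e\times e=e_2$. Then $P_\times=\{2\}$, so $2\in P_1$, $T_2(G)\subseteq B$, and $h_2(a)=\infty$ for every $a\in A$; no element of $A$ has zero height at the unique torsion prime, the summand $\bigoplus_p p^{h_p(a)}T_p(G)$ vanishes, and your torsion-summand obstruction disappears. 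Your fallback case also rests on a false claim: the torsion-free groups in $\mathcal{QD}1$ are precisely the rational groups of idempotent type, not only $\mathbb{Z}$ and $\mathbb{Q}$ (e.g.\ $\mathbb{Z}[1/2]\in\mathcal{RQD}1$ with cocharacteristic $(0,\infty,\infty,\dots)$), so ``$G$ torsion-free and $G\not\cong\mathbb{Z}$ forces $G=\mathbb{Q}$'' is wrong --- and in the mixed example above $G$ is not torsion-free anyway, so neither branch of your dichotomy applies.

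The paper closes this case by exploiting divisibility of $A$ rather than torsion of $G(\text{\normalfont char}\,a)$: since $\text{\normalfont cochar}\,G\neq(\infty,\infty,\dots)$, the set $P_1$ can be chosen as a nonempty subset of $P\setminus P_\infty(\chi)$, so $pA=A$ for some prime $p$; then $\tfrac{1}{p}e_0\in A$ has the same characteristic as $e_0$, hence lies in $G(\text{\normalfont char}\,e_0)=(e_0)_{AI}$, while $(e_0)_\times=\mathbb{Z}e_0$ by Theorem \ref{the2.3}(3) (because $e_0\times e=0$), and $\tfrac{1}{p}e_0\notin\mathbb{Z}e_0$. This single argument covers all cases, including the ones your dichotomy misses; replacing your torsion/torsion-free case split by this divisibility argument is what is needed to close the proof.
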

\begin{proof}
	1) If $\text{\normalfont cochar}\, G = (\infty, \infty, \cdots, \infty, \cdots)$, then $G$ is isomorphic to the additive group of integers. Consequently, every subgroup of the group $G$ is of the form $nG$ for some integer $n$, thus any ring on $G$ is an $FI$-ring.
	
	
	2) Let $\text{\normalfont cochar}\,G\neq (\infty,\infty,\cdots)$. The implication $a) \Rightarrow b)$ follows from the fact that any fully invariant subgroup of the group $G$ is its absolute ideal. 
	
	Now, let us show that $b) \Rightarrow c)$. Let $(G, \times)$ be an $AI$-ring. Assume that $e \times e \in T(G)$. Let us show that there exists a decomposition $G = A \oplus B$ such that
	\begin{equation}\label{eqq3.1}
		B\subseteq T(G),\quad A\times G=0,
	\end{equation}
	\begin{equation}\label{eqq3.2}
		pA=A\;\text{for some}\;p\in P.
	\end{equation}	
	If $G \in \mathcal{QD}1 \setminus \mathcal{RQD}1$, then the decomposition $G = \mathbb{Q} \oplus \mathbb{Z}_m\; (m \in \mathbb{Z})$ satisfies the conditions \eqref{eqq3.1} and \eqref{eqq3.2}.
	If $G \in \mathcal{RQD}1$ and $\text{\normalfont cochar}\, G = \chi$, then $P \setminus P_\infty(\chi) \neq \varnothing$ because $\chi \neq (\infty, \infty, \infty, \cdots)$. Let $P_\times = \{p \in P \mid \pi_p(e \times e) \neq 0\}$ (it is possible that $P_\times = \varnothing$ if $e \times e = 0$). Then, by \cite[Remark 4.2]{K-N23} there exists a non-empty finite subset $P_1$ of the set $P \setminus P_\infty(\chi)$ containing $P_\times$. Let $P_0 = P \setminus P_1$, $A = \pi_{P_0}(G)$, $B = \pi_{P_1}(G)$ (it is possible that $B = 0$ if $P_1 \cap P_\chi = \varnothing$). Then $A\subseteq G$, $B\subseteq G$ and the decomposition $G = A \oplus B$ satisfies the condition \eqref{eqq3.1}. Moreover, $pA = A$ for any $p \in P_1$.
	
	Thus, $G = A \oplus B$ and the groups $A, B$ satisfy the conditions \eqref{eqq3.1} and \eqref{eqq3.2}, so $e = e_0 + e_1$, where $e_0 \in A$, $e_1 \in B$. 
	Therefore, $\bigoplus\limits_{p \in P} p^{h_p(e_0 \times e)} T_p(G) = 0$. 
	By Theorem \ref{the2.3}, we get
	$$(e_0)_\times = \bigoplus\limits_{p \in P} p^{h_p(e_0 \times e)} T_p(G) + \mathbb{Z} e_0 = \mathbb{Z} e_0.$$
	From Theorem \ref{the3.3}, we obtain
	$$(e_0)_{AI} = G(\text{\normalfont char}\, e_0).$$
	Since $(G, \times)$ is an $AI$-ring, we have $(e_0)_\times = (e_0)_{AI}$ by \cite{K-T25}, thus $G(\text{\normalfont char}\, e_0) = \mathbb{Z} e_0$. Let $p \in P$ be such that $pA = A$. Since $\text{\normalfont char}\big(\dfrac{1}{p} e_0\big) = \text{\normalfont char}\, e_0$, we have $\dfrac{1}{p} e_0 \in G(\text{\normalfont char}\, e_0)$, which implies $\dfrac{1}{p} e_0 = n e_0 \in \mathbb{Z} e_0$ for some $n \in \mathbb{Z}$. Therefore $n p = 1$, since $o(e_0) = \infty$. The resulting contradiction proves that $e \times e \notin T(G)$.

	The implication $c) \Rightarrow d)$ is obvious.

	Let us show that $d) \Rightarrow a)$. Suppose that $G \times G \nsubseteq T(G)$ and $g \in G$. By Theorem \ref{the2.3} we have $(g)_\times = G(\text{\normalfont char}\, g)$ or $(g)_\times = T(\text{\normalfont char}\, g)$, where $T = T(A)$. Therefore, $(g)_\times$ is a fully invariant subgroup of the group $G$ for any $g \in G$. Any ideal $K$ of the ring $(G, \times)$ can be represented as $K = \sum\limits_{g \in K} (g)_\times$, that means $K$ is also a fully invariant subgroup of the group $G$. Therefore, $(G,\times)$ is an $FI$-ring.
\end{proof}

In conclusion, we note that, according to \cite{K-N23}, if $G$ is a group in $\mathcal{QD}1$, then the group $\text{\normalfont Mult}\, G$ of all multiplications of $G$ is isomorphic to the group $G$. This isomorphism takes each multiplication $\times$ in $\text{\normalfont Mult}\, G$ to the element $e \times e$, where $\{e\}$ is a basis of $G$. Let $M_{_{NAI}}$ be the set of multiplications on $G$ that determine rings which are not $AI$-rings. By Theorem \ref{the3.4}, we can assert that $M_{_{NAI}}$ is a subgroup of the group $\text{\normalfont Mult}\, G$ and coincides with the torsion part of $\text{\normalfont Mult}\, G$. Furthermore, $M_{_{NAI}} \cong T(G)$.

\end{document}